\documentclass[11pt]{amsart}

\usepackage{amsmath, amssymb, amsthm}
\usepackage{listings}
\usepackage{xifthen}
\usepackage{graphicx}
\usepackage{epstopdf}

\date{\today}

\theoremstyle{plain}
\newtheorem{theorem}{Theorem}[section]

\newtheorem{proposition}[theorem]{Proposition}

\newcommand{\niz}[4]{
\ifthenelse{\isempty{#1}}
{\def\a{f}}
{\def\a{#1}}
\ifthenelse{\isempty{#2}}
{\def\b{i}}
{\def\b{#2}}
\ifthenelse{\isempty{#3}}
{\def\c{1}}
{\def\c{#3}}
\ifthenelse{\isempty{#4}}
{\def\d{n}}
{\def\d{#4}}
\ensuremath{\left( {\a}_{\b} \right)_{\b = \c}^{\d}}
}
\newcommand{\nizf}{\niz{}{}{}{}}
\newcommand{\skup}[4]{
\ifthenelse{\isempty{#1}}
{\def\a{f}}
{\def\a{#1}}
\ifthenelse{\isempty{#2}}
{\def\b{i}}
{\def\b{#2}}
\ifthenelse{\isempty{#3}}
{\def\c{1}}
{\def\c{#3}}
\ifthenelse{\isempty{#4}}
{\def\d{n}}
{\def\d{#4}}
\ensuremath{\left\{ {\a}_{\b} \right\}_{\b = \c}^{\d}}
}
\newcommand{\skupf}{\skup{}{}{}{}}

\newcommand{\N}{\ensuremath{\mathbb{N}}}

\newcommand{\R}{\ensuremath{\mathbb{R}}}

\newcommand{\inner}[2]{\ensuremath{ {\left< #1 , #2 \right>} }}
\newcommand{\norm}[1]{\ensuremath{ {\left\| #1 \right\|} }}

\DeclareMathOperator{\ljuska}{span}

\DeclareMathOperator{\spn}{span}

\date{}
\title[Iterations of GGSP]{Iterations of the generalized Gram--Schmidt procedure for ge\-ne\-ra\-ting Parseval frames}
\author[T. Beri\' c]{Tomislav Beri\' c}

\address{Department of Mathematics, University of Zagreb,
Bijeni\v cka cesta 30, 10000 Zagreb, Croatia.}
\email{tberic@math.hr}

\begin{document}

\begin{abstract}
In this paper we describe some properties of the generalized Gram--Schmidt procedure (GGSP) for generating Parseval frames which was first introduced in~\cite{CasKut07}. Next we investigate the iterations of the procedure and its limit. In the end we give some examples of the iterated procedure.

\vspace{.1in}

\noindent
{\it AMS Mathematics Subject Classification:} 42C15, 94A12.

\noindent
{\it Key words and phrases:}  Finite--dimensional Hilbert space, Gram--Schmidt orthogonalization, linear dependence, Parseval frame, redundancy, iterations.
\end{abstract}

\maketitle

\section{Introduction}
\label{intro}

Let $H$ be a finite--dimensional Hilbert space. A sequence $(f_i)_{i = 1}^{n}$ in $H$ is a \emph{frame} for $H$ if there exist constants $0 < A \le B < \infty$ such that
\begin{equation} \label{EqFrame}
A \norm{f}^2 \le \sum_{i = 1}^{n} \left| \inner{f}{f_i} \right|^2 \le B \norm{f}^2, \quad \text{for all } f \in H.
\end{equation}
Frames for Hilbert spaces were introduced in~\cite{DS} by R.J. Duffin and A.C. Schaeffer in 1952. In 1980's frames begun to play an important role in wavelet and Gabor analysis. Since then, frames are an important tool in both theoretical and applied mathematics. Frames have found a number of applications due to the inbuilt redundancy which provides resilience to noise and coefficient erasures. Among them, frames for which $A = B = 1$ in~\eqref{EqFrame}, called \emph{Parseval frames}, have proved to be most useful since they provide the same simple reconstruction formula as orthonormal bases, but with the added benefit of having redundancy. Explicitly, if $(f_i)_{i = 1}^{n}$ is a Parseval frame, then
$$
f = \sum_{i = 1}^{n} \inner{f}{f_i} f_i, \quad \text{for all } f \in H.
$$
The \emph{frame operator} $S : H \to H$ is defined as $S f = \sum_{i = 1}^{n} \inner{f}{f_i} f_i$. It is positive and invertible and from the equality $S^{-\frac{1}{2}} S S^{-\frac{1}{2}} = I$ we can easily get that $\left( S^{-\frac{1}{2}} f_i \right)_{i = 1}^{n}$ is a Parseval frame, a fact which we will use in the rest of the paper. For more details on frame theory we refer to the book~\cite{Christ03} or the survey article~\cite{Cas00}.

%GGSP
In \cite{CasKut07}, an algorithm was devised which generates Parseval frames using a generalization of the Gram--Schmidt orthogonalization procedure (or GGSP for short). For a given frame $(f_i)_{i = 1}^{n}$, the algorithm produces a Parseval frame $(g_i)_{i = 1}^{n}$ in the following manner: the first vector $g_1$ is simply the normalized vector $f_1$ as in the first step of the ordinary Gram--Schmidt algorithm. If $f_k \not \in \spn \{f_i\}_{i = 1}^{k-1}$, then $g_k$ is derived from the ordinary Gram--Schmidt step. If, on the other hand, $f_k$ is linearly dependent on the previous vectors, then $(g_i)_{i = 1}^{k}$ is the Parseval frame $S^{-1/2} \left( (g_i)_{i = 1}^{k-1} \cup \{f_k\} \right)$, where $S$ is the frame operator of the frame $(g_i)_{i = 1}^{k-1} \cup \{f_k\}$. In this step the previously generated vectors $g_1, \ldots, g_{k-1}$ have to be adjusted using the vector $f_k$. An important feature of this construction is that in each step $k$ we get a Parseval frame for $\spn \left( f_i \right)_{i = 1}^{k}$.

We will denote the mapping $(f_i)_{i = 1}^{n} \mapsto (g_i)_{i = 1}^{n}$ by $\Phi$. The algorithm's pseudocode is given below, verbatim as in~\cite{CasKut07}.

\begin{lstlisting}[language=Pascal, numbers=left, escapeinside='']
Procedure GGSP(n, f; g).
    for k := 1 to n do
    begin
        if '$f_k = 0$' then
            '$g_k := 0$';
        else
        begin
            '$g_k := f_k - \sum_{j = 1}^{k-1} \inner{f_k}{g_j} g_j$';
            if '$g_k \neq 0$' then
                '$g_k := \frac{1}{\norm{g_k}} g_k$';
            else
            begin
                for i := 1 to k - 1 do
                    '$g_i := g_i + \frac{1}{\norm{f_k}^2} \left( \frac{1}{\sqrt{1+\norm{f_k}^2}} - 1 \right) \inner{g_i}{f_k} f_k$';
                '$g_k := \frac{1}{\sqrt{1 + \norm{f_k}^2}} f_k$';
            end;
        end;
    end;
end.
\end{lstlisting}
We refer the reader to~\cite{CasKut07} for details on the algorithm and some of its properties.

The objective of this paper is to further investigate the algorithm with an emphasis on the iterations of the algorithm. Iterative algorithms are often employed in applications and in frame theory some notable examples of iterative procedures include the frame algorithm (\cite{Gro93}) and the gradient descent of the frame potential used for construction of approximate unit-norm tight frames (\cite{CasFicMix}). We are inspired here by these algorithms to study the limit case of the iterated GGSP.

We will end this section by introducing a term that will be important in the rest of the paper. We say that a sequence of vectors $\nizf$ in a Hilbert space $H$ is a \emph{zero extended orthonormal sequence} if the sequence becomes orthonormal once we remove all the zero vectors. We say that $\nizf$ is a \emph{zero extended orthonormal basis} if the reduced sequence is an orthonormal basis.

\section{Properties of the iterated GGSP}
\label{sec:1}

Since we will be dealing with the iterations of GGSP with the limit case in mind, the first thing we would like to know is which frames remain unchanged under the application of GGSP. It turns out that that set is the same as for the ordinary Gram--Schmidt procedure.

\begin{proposition} \label{PropStationaryGGSP}
Let $\nizf$ be a frame for a Hilbert space $H$. The following statements are equivalent:
\begin{itemize}
\item[(i)] $\Phi \left( \nizf \right) = \nizf$,

\item[(ii)] $\nizf$ is a zero extended orthonormal basis. %basis for $\ljuska \skupf$.
\end{itemize}
\end{proposition}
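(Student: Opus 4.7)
The direction (ii)~$\Rightarrow$~(i) is a routine induction on the step index $k$: if by stage $k-1$ the GGSP output still matches the input, then at step $k$ either $f_k=0$ (and the ``if'' branch gives $g_k=0$), or the orthonormality of the nonzero $f_j$, $j<k$, makes every inner product $\inner{f_k}{g_j}=\inner{f_k}{f_j}$ vanish, so the projected vector is $f_k$ itself, is already unit-norm, and normalization leaves it fixed; the adjustment branch is never reached.

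The substantive direction is (i)~$\Rightarrow$~(ii), and I plan a two-stage argument. The first stage rules out the adjustment (inner else) branch ever firing. The key observation is to examine the \emph{last} index $k^*$ at which the adjustment branch is executed: no subsequent step of GGSP touches $g_{k^*}$ again (the later iterations either set $g_{k'}=0$ or perform a Gram--Schmidt step, both of which leave earlier coordinates untouched), so the final output at coordinate $k^*$ equals the value assigned in step $k^*$, namely $\tfrac{1}{\sqrt{1+\norm{f_{k^*}}^2}}f_{k^*}$. Setting this equal to $f_{k^*}$ and using that $f_{k^*}\neq 0$ (the outer else branch) forces $\norm{f_{k^*}}=0$, a contradiction.

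With adjustment ruled out, GGSP is ordinary Gram--Schmidt (with zero handling), so I would close the argument by a second induction on $k$. For $k$ with $f_k\neq 0$, the fixed-point condition reads $f_k=h_k/\norm{h_k}$ with $h_k=f_k-\sum_{j<k}\inner{f_k}{g_j}g_j$, which rearranges to $(1-\norm{h_k})f_k=\sum_{j<k}\inner{f_k}{g_j}g_j$. By the inductive hypothesis $g_j=f_j$ and the nonzero $f_j$ for $j<k$ are orthonormal, so the right-hand side is the orthogonal projection of $f_k$ onto $\spn\skup{f}{j}{}{k-1}$; if $f_k$ were in that span the projection would equal $f_k$, giving $h_k=0$ and contradicting the exclusion of the adjustment branch. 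Hence $f_k$ is independent of the span, both sides vanish, and the Pythagorean identity then forces $\norm{f_k}=1$. Closing the induction and invoking that $\nizf$ spans $H$ (it is a frame) gives the zero extended orthonormal basis. The only conceptually nontrivial point is the ``look at the last adjustment'' trick; without it, keeping track of how subsequent adjustments cascade through earlier coordinates would be cumbersome.
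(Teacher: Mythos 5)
Your proposal is correct and takes essentially the same route as the paper: the published proof of (i)$\Rightarrow$(ii) likewise isolates the greatest index $k$ with $f_k \neq 0$ and $f_k \in \spn \{f_i\}_{i=1}^{k-1}$, notes that $g_k$ is not altered after step $k$, and derives the contradiction $\norm{f_k}^2 = \norm{f_k}^2 / (1 + \norm{f_k}^2)$ from the fixed-point condition. The only difference is in the finish: once the adjustment branch is excluded, the paper simply observes that GGSP reduces to ordinary Gram--Schmidt, whose output is a zero extended orthonormal basis and by hypothesis equals the input, whereas you re-derive the fixed points of ordinary Gram--Schmidt by a second induction --- slightly more work for the same conclusion.
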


\begin{proof}
If (ii) holds, GGSP becomes the ordinary Gram-Schmidt procedure (leaving zero vectors unchanged) so it doesn't change the orthonormal basis.

If we assume (i) is true, let $k \in \{1, 2, \ldots, n\}$ be the greatest index (if it exists) for which $f_k \neq 0$ and $f_k \in \ljuska \skup{}{}{}{k-1}$. The $k$--th vector can change only in the $k$--th step of the algorithm. So we must have $f_k = \dfrac{1}{\sqrt{1 + \norm{f_k}^2}} f_k$. Taking norms of both sides, we get $\norm{f_k}^2 = \dfrac{\norm{f_k}^2}{1 + \norm{f_k}^2}$ so $f_k$ has to be zero which is a contradiction.

Therefore, $\skup{f}{i}{1}{n} \setminus \{0\}$ is a linearly independent set. Therefore $k \le d$ ($d = \dim H$). Since GGSP for linearly independent sets is the regular Gram-Schmidt procedure, we have that $\Phi \left( \niz{f}{i}{1}{n} \right)$ is an orthonormal basis (possibly with zeros). Since we assumed $\Phi \left( \nizf \right) = \nizf$, it follows that $\nizf \setminus \{0\}$ is an orthonormal basis for $\ljuska \skupf = H$.
\end{proof}

Next we will turn our attention to the iterations of the GGSP and describe the limit case with regards to the $\ell^2$--norm. We denote by $G_0 = \left( g_{i}^{(0)} \right)_{i = 1}^{n} = \nizf$, the starting frame for $H$. Also, recursively we define the sequence
$$
G_{m+1} = \left( g_{i}^{(m+1)} \right)_{i = 1}^{n} := \Phi( G_m ), \quad m \ge 0,
$$
and
$$
G = \niz{g}{}{}{} := \left(\norm{\cdot}_2\right) \lim_{m \to \infty} \left( g_{i}^{(m)} \right)_{i = 1}^{n}
$$
if the limit exists. Notice that if the limit $\niz{g}{}{}{}$ exists, it is also a Parseval frame.

We will also adopt the following notation: since the input vectors may change more than once during the application of GGSP, we will have to observe not just the final vectors, but also the vectors as they appear in each step. Let us denote by $g_{i}^{(m, k)}$ the $i$--th vector we get in the $k$--th step of the $m$--th iteration of GGSP. We immediately see that $g_{i}^{(m, n)} = g_{i}^{(m)}$ for all $i$ and $m$.

There are two possibilities for the starting frame. Either the last vector is in the span of the preceding vectors, or it is not. The next proposition shows that we only need consider the first case when we study the convergence of iterations because in the latter case the last vector stabilizes right after the first iteration and has no effect on the other vectors.

\begin{proposition} \label{PropLastStabilizes}
If $f_{n} \notin \spn \{f_i\}_{i = 1}^{n-1}$, then for all $m > 1$ we have $g_{n}^{(m)} = g_{n}^{(1)} = \alpha (I - P) f_{n}$, where $P$ is the orthogonal projection onto $\spn \{f_i\}_{i = 1}^{n-1}$ and $\alpha = \norm{(I - P) f_{n}}^{-1}$.% is chosen so that $g_{n}^{(1)}$ is a unit vector.
\end{proposition}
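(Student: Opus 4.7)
I will prove by induction on $m$ the strengthened statement: for every $m \ge 1$, (a) the family $\bigl(g_i^{(m)}\bigr)_{i=1}^{n-1}$ is a Parseval frame for $V := \spn\{f_i\}_{i=1}^{n-1}$, and (b) $g_n^{(m)} = \alpha(I-P)f_n$, a unit vector lying in $V^{\perp}$. The proposition is exactly part (b).

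For the base case $m = 1$, the remark from the introduction that the $k$-th step of GGSP yields a Parseval frame for $\spn\{f_i\}_{i=1}^{k}$ gives, at $k = n-1$, a Parseval frame $\bigl(g_i^{(1,n-1)}\bigr)_{i=1}^{n-1}$ for $V$. Since $f_n \notin V$, step $n$ is the ordinary Gram--Schmidt step (which leaves the first $n-1$ vectors untouched), so (a) reduces to this observation. For (b) I use the identity $\sum_{j=1}^{n-1} \inner{f_n}{g_j^{(1,n-1)}} g_j^{(1,n-1)} = P f_n$, which follows from applying the Parseval reconstruction to $Pf_n \in V$ together with $g_j^{(1,n-1)} \in V$; then $g_n^{(1,n)} = (I-P)f_n \neq 0$ and normalization produces $\alpha(I-P)f_n$.

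For the inductive step, assume (a) and (b) hold at level $m-1$. Step $k$ of GGSP modifies only vectors with index $\le k$, so the first $n-1$ steps of iteration $m$ depend only on the truncated input $\bigl(g_i^{(m-1)}\bigr)_{i=1}^{n-1}$, which by hypothesis is already a Parseval frame for $V$. Applying the same ``Parseval frame after step $k$'' property to this truncated input shows that $\bigl(g_i^{(m,n-1)}\bigr)_{i=1}^{n-1}$ is a Parseval frame for $V$, while $g_n^{(m,n-1)} = g_n^{(m-1)} = \alpha(I-P)f_n$ is still a unit vector orthogonal to $V$. In step $n$ every inner product $\inner{g_n^{(m,n-1)}}{g_j^{(m,n-1)}}$ vanishes, the subtraction in the outer ``else'' branch leaves $g_n^{(m,n-1)}$ intact, the ``if $g_k \neq 0$'' branch is taken, and the normalization is trivial; thus $g_n^{(m)} = g_n^{(m-1)}$ and the first $n-1$ vectors are not altered in step $n$, closing the induction.

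The main subtlety is bookkeeping: one must confirm that the first $n-1$ steps of iteration $m$ genuinely produce the same output irrespective of the $n$-th input entry, which is what lets the orthogonality $g_n^{(m,n-1)} \perp \spn\{g_j^{(m,n-1)}\}_{j=1}^{n-1}$ survive and forces step $n$ into the Gram--Schmidt branch every time. Once this is made explicit, the two small Parseval-reconstruction identities above do all the remaining work.
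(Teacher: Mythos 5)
Your proof is correct and follows essentially the same route as the paper: both use the fact that $\bigl(g_i^{(m,n-1)}\bigr)_{i=1}^{n-1}$ is a Parseval frame for $V$ so that Parseval reconstruction turns the Gram--Schmidt sum into $Pf_n$, and then the idempotence of $I-P$ (equivalently, the orthogonality $g_n^{(m-1)}\perp V$) fixes the $n$-th vector from the first iteration onward. You merely make explicit the induction and bookkeeping that the paper compresses into ``in the same way, in the second iteration\dots'' and ``in each of the following iterations we will get the same vector.''
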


\begin{proof}
Let us denote by $P$ the orthogonal projection onto $\spn \{f_i\}_{i = 1}^{n-1}$. After $n-1$ steps of the first iteration we get the vectors $g_{i}^{(1, n-1)} = g_{i}^{(1)}$, $i = 1, \ldots, n-1$, which form a Parseval frame for its span. Also, we have $\spn \{g_{i}^{(1)}\}_{i = 1}^{n-1} = \spn \{f_i\}_{i = 1}^{n-1}$. Now let's observe the $n$--th step:
\begin{align*}
g_{n}^{(1)} &= g_{n}^{(1, n)} = \alpha \left( f_n - \sum_{i = 1}^{n-1} \inner{f_n}{g_{i}^{(1)}} g_{i}^{(1)} \right) \\
&= \alpha \left( f_n - \sum_{i = 1}^{n-1} \inner{f_n}{P g_{i}^{(1)}} g_{i}^{(1)} \right) \\
&= \alpha \left( f_n - \sum_{i = 1}^{n-1} \inner{P f_n}{g_{i}^{(1)}} g_{i}^{(1)} \right) \\
&= \alpha \left( f_n - P f_n \right) = \alpha (I - P) f_n,
\end{align*}
where $\alpha = \norm{(I - P) f_{n}}^{-1}$ so that $g_{n}^{(1)}$ is a unit vector. Now, in the same way, in the second iteration we get
$$
g_{n}^{(2)} = (I - P) \alpha (I - P) f_n = \alpha (I - P)^2 f_n = \alpha (I - P) f_n = g_{n}^{(1)}
$$
which is again a unit vector so we don't need to normalize it. In each of the following iterations we will get the same vector.
\end{proof}

The case when the last vector is linearly dependent upon the previous vectors is actually just a special case of a more general result which we give in the next theorem.

\begin{theorem} \label{TmLinZavisniuNulu}
If $f_{k} \in \spn \{f_i\}_{i = 1}^{k-1}$, then $g_{k}^{(m)}$ converges to zero as $m$ tends to infinity.
\end{theorem}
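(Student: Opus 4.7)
The plan is to reduce the problem to a scalar recurrence. First I would show that at step $k$ of every iteration the algorithm enters the special (linearly-dependent) branch, yielding the explicit update
$$
g_k^{(m+1,k)} = \frac{g_k^{(m)}}{\sqrt{1+\norm{g_k^{(m)}}^2}}.
$$
Second, I would verify that none of the remaining steps $\ell = k+1, \ldots, n$ of iteration $m+1$ can increase $\norm{g_k}$. Together these give
$$
\norm{g_k^{(m+1)}}^2 \le \frac{\norm{g_k^{(m)}}^2}{1+\norm{g_k^{(m)}}^2},
$$
and setting $a_m = \norm{g_k^{(m)}}^2$ and passing to reciprocals (the case $a_m = 0$ being immediate) gives $1/a_{m+1} \ge 1 + 1/a_m$, hence $a_m \le (m + 1/\norm{f_k}^2)^{-1} \to 0$.

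The heart of the argument is the first point. By the span-preservation built into GGSP at each of its steps, $\spn\{g_j^{(m+1,k-1)}\}_{j=1}^{k-1} = \spn\{g_j^{(m)}\}_{j=1}^{k-1}$, so the special branch at step $k$ fires exactly when $g_k^{(m)}$ lies in $\spn\{g_j^{(m)}\}_{j=1}^{k-1}$. In iteration $1$ this is immediate from the hypothesis $f_k \in \spn\{f_i\}_{i=1}^{k-1}$, but for $m \ge 1$ the vectors $g_j^{(m)}$ need not lie in the subspaces $V_j := \spn\{f_i\}_{i=1}^j$, so a static subspace argument fails. I would instead prove the invariant
$$
\dim \spn\{g_j^{(m)}\}_{j=1}^i = \dim V_i \qquad \text{for all } i \text{ and all } m
$$
by induction on $m$, analysing what one full iteration does to the dimension of each initial segment step by step. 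The Gram--Schmidt branch at any step $\ell$ leaves the first $\ell-1$ vectors untouched, and the special branch applies the invertible operator $S_\ell^{-1/2}$ simultaneously to those vectors and to $g_\ell^{(m)}$, preserving all linear dependencies among them; the span-preservation statement for a complete pass of GGSP closes the induction. Since $f_k \in V_{k-1}$ gives $\dim V_k = \dim V_{k-1}$, the invariant forces $g_k^{(m)} \in \spn\{g_j^{(m)}\}_{j=1}^{k-1}$ for every $m$, as required.

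The second point is then straightforward: in any step $\ell > k$ of iteration $m+1$ either $g_k$ is left untouched (Gram--Schmidt branch) or it is replaced by $S_\ell^{-1/2} g_k^{(m+1,\ell-1)}$ (special branch). Because $S_\ell = I + g_\ell^{(m)}(g_\ell^{(m)})^{\ast} \ge I$ on the ambient subspace, $S_\ell^{-1/2}$ is a contraction and $\norm{g_k^{(m+1,\ell)}} \le \norm{g_k^{(m+1,\ell-1)}}$. Chaining these from $\ell = k+1$ up to $\ell = n$ gives $\norm{g_k^{(m+1)}} \le \norm{g_k^{(m+1,k)}}$, which combined with the explicit expression for $g_k^{(m+1,k)}$ is the desired bound.

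The main obstacle is the dimension invariant above: since $g_k^{(m)}$ genuinely can drift out of $V_{k-1}$ after a few iterations, preservation of the linear-dependence pattern cannot be read off any fixed ambient subspace and must instead be extracted from the invertibility of the rescaling operations GGSP applies at each internal step.
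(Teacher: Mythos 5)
Your proof is correct, and it takes a genuinely different --- and leaner --- route than the paper's. The paper enumerates all dependent indices $k_1 < \ldots < k_s$, derives the exact formula $g_{k_s}^{(m)} = \left(1+m\norm{f_{k_s}}^2\right)^{-1/2} f_{k_s}$ for the last of them, and then runs a backwards induction over $k_{s-1}, k_{s-2}, \ldots$ using two-sided bounds with parameters $\gamma_m \in \left[\frac{1}{1+\varepsilon},1\right]$ and an $\varepsilon$-argument. You instead treat each dependent index on its own: the step-$k$ update $g_k^{(m+1,k)} = g_k^{(m)}/\sqrt{1+\norm{g_k^{(m)}}^2}$, the observation that every later step acts on $g_k$ either as the identity or as the contraction $S_\ell^{-1/2}$ on the subspace containing it, and the reciprocal recurrence $1/a_{m+1} \ge 1/a_m + 1$. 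Notably, the paper derives exactly your one-step upper bound as its inequality \eqref{IneqGornjaOgrada} but then does not exploit it directly; your telescoping of reciprocals replaces the whole $\gamma_m$ computation and yields the explicit rate $\norm{g_k^{(m)}}^2 \le \left(m + \norm{f_k}^{-2}\right)^{-1}$ uniformly for every dependent index, not just the last one. You are also more careful than the paper on a point it silently assumes: that the special branch fires at step $k$ in \emph{every} iteration. Your dimension invariant $\dim\spn\{g_j^{(m)}\}_{j=1}^{i} = \dim\spn\{f_j\}_{j=1}^{i}$ is the right formulation --- the spans of short initial segments genuinely can rotate between iterations because a later special branch applies $S_\ell^{-1/2}$ to all preceding vectors at once, so only dimensions (not the subspaces themselves) are preserved --- and your induction through the two branches closes it. The only cosmetic addition worth making is the remark that if $f_k = 0$ the zero branch fires forever and the claim is trivial, while for $f_k \neq 0$ both branches preserve nonvanishing, so the special branch is indeed the one that fires at step $k$ in every iteration.
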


\begin{proof}
Let us enumerate by $k_1 < k_2 < \ldots < k_s$ all the indices such that $f_{k_r} \in \ljuska \skup{}{}{}{k_r-1}$. For an arbitrary index $j$ among them, in the first iteration and the $j$--th step of GGSP we get the vector $g_{j}^{(1, j)}$. If $j < k_s$, the $j$--th vector will later change, let's say that the first time it happens is in the $k$--th step. The square of the new vector's norm will be:

\setlength{\abovedisplayskip}{1pt}
\setlength{\belowdisplayskip}{10pt}

\begin{align*}
\norm{ g_{j}^{(1, k)} }^2 &=
\norm{ g_{j}^{(1, j)} + \frac{1}{\norm{f_k}^2} \left( \frac{1}{\sqrt{1 + \norm{f_k}^2}} - 1 \right) \inner{g_{j}^{(1, j)}}{f_k} f_k }^2 =\\
&= \norm{g_{j}^{(1, j)}}^2 + 2 \frac{1}{\norm{f_k}^2} \left( \frac{1}{\sqrt{1 + \norm{f_k}^2}} - 1 \right) \left| \inner{g_{j}^{(1, j)}}{f_k} \right|^2 +\\
&+ \frac{1}{\norm{f_k}^4} \left( \frac{1}{\sqrt{1 + \norm{f_k}^2}} - 1 \right)^2 \left| \inner{g_{j}^{(1, j)}}{f_k} \right|^2 \cdot \norm{f_k}^2 = \\
&= \norm{g_{j}^{(1, j)}}^2 + \frac{1}{\norm{f_k}^2}  \left| \inner{g_{j}^{(1, j)}}{f_k} \right|^2 \left( \frac{1}{\sqrt{1 + \norm{f_k}^2}} - 1 \right) \left( 2 + \frac{1}{\sqrt{1 + \norm{f_k}^2}} - 1 \right) =\\
&= \norm{g_{j}^{(1, j)}}^2 - \frac{1}{1 + \norm{f_k}^2} \left| \inner{g_{j}^{(1, j)}}{f_k} \right|^2. %\ge 1 - \frac{1}{1 + \norm{f_k}^2} \norm{ g_{j}^{(1)} }^2 \norm{f_k}^2 = \frac{1}{1 + \norm{f_k}^2}.
\end{align*}
Since $j = k_l$ for some $l$, then in this manner the $j$--th vector will change in steps $k_{l+1}$, $k_{l+2}$, \ldots, $k_{s}$ giving us vectors $g_{k_l}^{(m, k_{l+1})}$, \ldots, $g_{k_l}^{(m, k_s)}$, respectively. By the previous calculation we see that for all $m \in \N$, $1 \le l \le s-1$, $l+1 \le r \le s$ we have

\setlength{\abovedisplayskip}{2pt}
\setlength{\belowdisplayskip}{0pt}

\begin{equation} \label{EqChange}
\norm{ g_{k_l}^{(m, k_r)} }^2 = \norm{ g_{k_l}^{(m, k_{r-1})} }^2 - \frac{1}{1 + \norm{ g_{k_r}^{(m - 1)} }^2} \left| \inner{ g_{k_l}^{(m, k_{r-1})} }{ g_{k_r}^{(m - 1)} } \right|^2.
\end{equation}
In particular, since $g_{k_l}^{(m, k_l)} = \frac{ g_{k_l}^{(m-1)} }{\sqrt{1 + \norm{ g_{k_l}^{(m-1)} }^2}}$, in the $k_{l+1}$--th step using the Cauchy-Schwarz inequality we get

\setlength{\abovedisplayskip}{2pt}
\setlength{\belowdisplayskip}{8pt}

\begin{align} \label{EqChange1}
\norm{ g_{k_l}^{(m, k_{l+1})} }^2 &= \norm{ g_{k_l}^{(m, k_l)} }^2 - \frac{1}{1 + \norm{ g_{k_{l+1}}^{(m - 1)} }^2} \left| \inner{ g_{k_l}^{(m, k_l)} }{ g_{k_{l+1}}^{(m - 1)} } \right|^2 \notag \\
&\ge \norm{ g_{k_l}^{(m, k_l)} }^2 - \frac{ \norm{ g_{k_l}^{(m, k_l)} }^2 \norm{ g_{k_{l+1}}^{(m - 1)} }^2 }{1 + \norm{ g_{k_{l+1}}^{(m - 1)} }^2} \notag \\
&= \norm{ g_{k_l}^{(m, k_l)} }^2 \left( 1 - \frac{ \norm{ g_{k_{l+1}}^{(m - 1)} }^2 }{1 + \norm{ g_{k_{l+1}}^{(m - 1)} }^2} \right) \notag \\
&= \frac{\norm{ g_{k_l}^{(m-1)} }^2}{1 + \norm{ g_{k_l}^{(m-1)} }^2} \cdot \frac{1}{1 + \norm{ g_{k_{l+1}}^{(m-1)} }^2}.
\end{align}
The $k_l$--th vector can change in this way a finite number of times ($s-l$ times, to be exact) and each time we get a decrease in norm as in~$\eqref{EqChange}$. In the end we will have a lower bound on the norm:
\begin{equation}  \label{EqChangeFinal}
\norm{ g_{k_l}^{(m)} }^2 \ge \frac{\norm{ g_{k_l}^{(m-1)} }^2}{1 + \norm{ g_{k_l}^{(m-1)} }^2} \cdot \frac{1}{1 + \norm{ g_{k_{l+1}}^{(m-1)} }^2} \cdots \frac{1}{1 + \norm{ g_{k_s}^{(m-1)} }^2}
\end{equation}

On the other hand, the vector $g_{k_s}^{(1)}$ satisfies:
$$
\norm{g_{k_s}^{(1)}}^2 = \frac{\norm{f_{k_s}}^2}{1 + \norm{f_{k_s}}^2}.
$$
We then have
$$
g_{k_s}^{(2)} = \frac{1}{\sqrt{1 + \norm{g_{k_s}^{(1)}}^2}} g_{k_s}^{(1)} = \frac{1}{\sqrt{1 + \dfrac{\norm{f_{k_s}}^2}{1 + \norm{f_{k_s}}^2}}} \cdot \frac{1}{\sqrt{1 + \norm{f_{k_s}}^2}} f_{k_s} = \frac{1}{\sqrt{1 + 2 \norm{f_{k_s}}^2}} f_{k_s}.
$$
Easy induction shows that
\begin{equation} \label{EqZadnjiuNulu}
g_{k_s}^{(m)} = \frac{1}{\sqrt{1 + m \norm{f_{k_s}}^2}} f_{k_s}, \quad m \in \N.
\end{equation}
Therefore, $\lim_{m \to \infty} \norm{g_{k_s}^{(m)}} = 0$. Now, for a fixed $0 < \varepsilon < 1$ we can find $m_0 \in \N$ such that $\norm{g_{k_s}^{(m)}}^2 < \varepsilon$, $\forall m \ge m_0$. For any $m > m_0$ from~\eqref{EqChange} we get
\begin{align*}
\norm{g_{k_{s-1}}^{(m)}}^2 &= \frac{ \norm{g_{k_{s-1}}^{(m-1)}}^2 }{ 1 + \norm{g_{k_{s-1}}^{(m-1)}}^2 } - \frac{1}{ 1 + \norm{g_{k_{s}}^{(m-1)}}^2 } \left| \inner{ \frac{ g_{k_{s-1}}^{(m-1)} }{\sqrt{1 + \norm{g_{k_{s}}^{(m-1)}}^2} } }{ g_{k_{s}}^{(m-1)} } \right|^2 = \\
&= \frac{ 1 }{ 1 + \norm{g_{k_{s-1}}^{(m-1)}}^2 } \left( \norm{g_{k_{s-1}}^{(m-1)}}^2 - \frac{ \left| \inner{ g_{k_{s-1}}^{(m-1)}  }{ g_{k_{s}}^{(m-1)} } \right|^2 }{ 1 + \norm{g_{k_{s}}^{(m-1)}}^2 } \right).
\end{align*}
We see that
\begin{equation} \label{IneqGornjaOgrada}
\norm{g_{k_{s-1}}^{(m)}}^2 \le \frac{ \norm{g_{k_{s-1}}^{(m-1)}}^2 }{ 1 + \norm{g_{k_{s-1}}^{(m-1)}}^2 }.
\end{equation}
Also, using~\eqref{EqChangeFinal} we get that
\begin{equation} \label{IneqDonjaOgrada}
\norm{g_{k_{s-1}}^{(m)}}^2 \ge \frac{ \norm{g_{k_{s-1}}^{(m-1)}}^2 }{ 1 + \norm{g_{k_{s-1}}^{(m-1)}}^2 } \frac{1}{1+\varepsilon}. \end{equation}
Combining~$\eqref{IneqGornjaOgrada}$~and~$\eqref{IneqDonjaOgrada}$ we see that
$$
\norm{g_{k_{s-1}}^{(m)}}^2 = \gamma_m \frac{ \norm{g_{k_{s-1}}^{(m-1)}}^2 }{ 1 +  \norm{g_{k_{s-1}}^{(m-1)}}^2 },
$$
for some $\gamma_m \in \left[ \frac{1}{1+\varepsilon}, 1 \right]$. Using the last result we can easily see that
\begin{align*}
\norm{g_{k_{s-1}}^{(m_0 + l)}}^2 &= \frac{ \gamma_{m_0 + 1} \cdots \gamma_{m_0 + l} \cdot \norm{g_{k_{s-1}}^{(m_0)}}^2 }{ 1 + \left( 1 + \gamma_{m_0 + 1} + \gamma_{m_0 + 1} \gamma_{m_0 + 2} + \ldots + \gamma_{m_0 + 1} \gamma_{m_0 + 2} \cdots \gamma_{m_0 + l - 1} \right) \norm{g_{k_{s-1}}^{(m_0)}}^2 } \le \\
&\le \frac{ \norm{g_{k_{s-1}}^{(m_0)}}^2 }{ \left( 1 + \frac{1}{1+\varepsilon} + \frac{1}{(1+\varepsilon)^2} + \ldots + \frac{1}{(1+\varepsilon)^{l-1}} \right) \norm{g_{k_{s-1}}^{(m_0)}}^2 } = \\
&= \frac{ 1 }{ \frac{ 1 - \frac{1}{(1+\varepsilon)^l}}{1 - \frac{1}{1+\varepsilon}} } = \frac{\frac{\varepsilon}{1 + \varepsilon}}{ \frac{ (1 + \varepsilon )^l - 1 }{ (1 + \varepsilon )^l } } = \frac{\varepsilon (1 + \varepsilon )^{l-1} }{ (1 + \varepsilon )^l - 1  } < 2 \varepsilon
\end{align*}
holds for big enough $l \in \N$.
%TODO: raspisati indukciju
Therefore, $\lim_{m \to \infty} \norm{g_{k_{s-1}}^{(m)}} = 0$ also. We would get that $\lim_{m \to \infty} \norm{g_{k_r}^{(m)}} = 0$, for all $r \in \{ 1, 2, \ldots, s \}$, analogously using the estimate~\eqref{EqChangeFinal} and the parameters $\gamma_m \in \left[ \frac{1}{(1+\varepsilon)^k}, 1 \right]$, for a suitable $k \in \N$.
\end{proof}

Observe that in no iterations will we get a zero vector if we didn't start with a zero vector, we can just get it in the limit case. Using Theorem~\ref{TmLinZavisniuNulu} we can now state our main result.

\begin{theorem} \label{TmMainResult}
The sequence $\left( G_m \right)_{m \ge 0}$ has a convergent subsequence in the $\ell^2$--norm for any choice of the starting sequence $\nizf{}{}{}{}$ and its limit $\niz{g}{}{}{}$ is a zero extended orthonormal basis for $\ljuska \skupf$. Moreover, $g_k = 0$ if and only if $f_k \in \ljuska \skup{f}{i}{1}{k-1}$.
\end{theorem}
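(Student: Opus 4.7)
My plan is to extract a convergent subsequence by a compactness argument, identify the limit as a Parseval frame for $V := \ljuska \skupf$, and then combine Theorem~\ref{TmLinZavisniuNulu} with a dimension count to pin the limit down as a zero extended orthonormal basis.

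First I would record the fact (established in~\cite{CasKut07} and used throughout the paper) that each iterate $G_m$ is a Parseval frame for the fixed subspace $V$, so in particular $\norm{g_i^{(m)}} \le 1$ for all $i$ and $m$: this follows from the Parseval identity $\norm{g_i^{(m)}}^2 = \sum_j |\inner{g_i^{(m)}}{g_j^{(m)}}|^2 \ge \norm{g_i^{(m)}}^4$. Hence $(G_m)_{m \ge 0}$ lies in the compact set $\{v \in V : \norm{v} \le 1\}^n$, and Bolzano--Weierstrass yields a subsequence $(G_{m_l})_l$ converging in the $\ell^2$-norm (coordinatewise) to some tuple $(g_i)_{i = 1}^{n} \subset V$.

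Next I would check that the limit is itself a Parseval frame for $V$. This is immediate from continuity: for every fixed $f \in V$ the identity $\norm{f}^2 = \sum_{i = 1}^n |\inner{f}{g_i^{(m_l)}}|^2$ holds for each $l$, and since the sum is finite we may pass to the limit to obtain $\norm{f}^2 = \sum_{i = 1}^n |\inner{f}{g_i}|^2$. Combining this with Theorem~\ref{TmLinZavisniuNulu}, which forces $g_k = 0$ whenever $f_k \in \ljuska \skup{f}{i}{1}{k-1}$, the nonzero entries of $(g_i)$ are supported on the set $I := \{k : f_k \notin \ljuska \skup{f}{i}{1}{k-1}\}$, whose cardinality is precisely $d := \dim V$.

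The rest is a one-line counting argument: removing the zero entries leaves a Parseval frame for $V$ with at most $d$ vectors, but any Parseval frame for a $d$-dimensional space must contain at least $d$ vectors, with equality forcing an orthonormal basis. Therefore $(g_i)_{i \in I}$ consists of exactly $d$ nonzero vectors forming an orthonormal basis for $V$, which simultaneously yields the zero extended orthonormal basis conclusion and the converse direction $g_k \ne 0 \Leftrightarrow k \in I$. The only place demanding real work is Theorem~\ref{TmLinZavisniuNulu}, which is already proved; once one trusts the Parseval-preservation property of GGSP along iterations, the present theorem follows painlessly from compactness, continuity of the frame inequality, and dimension counting.
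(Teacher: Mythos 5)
Your proof is correct and follows essentially the same route as the paper's: a compactness argument on the sequence of Parseval frames, Theorem~\ref{TmLinZavisniuNulu} to locate the zero vectors of the limit, and the fact that a Parseval frame with exactly $d$ nonzero vectors in a $d$-dimensional space must be an orthonormal basis. If anything, your version is slightly more complete, since you spell out the continuity of the Parseval identity under $\ell^2$-limits and the dimension count ruling out additional zero vectors, both of which the paper leaves implicit.
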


\begin{proof}
First, let's observe that the square of the $\ell^2$--norm of any Parseval frame is equal to the dimension of the Hilbert space. Therefore, we have a sequence $\left( G_m \right)_{m \ge 0}$ of Parseval frames (except possibly for the starting sequence which can be an arbitrary frame), that is, it is a sequence of elements on the sphere of radius $\sqrt{d}$ in $H^n$, where $H$ is a $d$--dimensional Hilbert space. An easy compactness argument gives us a subsequence which converges to a limit $G$ which is also a Parseval frame. %We claim that the entire sequence also converges to the same limit.

Now we have a limit Parseval frame $\niz{g}{}{}{}$ for a $d$ dimensional space which has exactly $n-d$ zero vectors. It is known that such a sequence must be an orthonormal basis with $n-d$ zeros added.
\end{proof}

The limit zero extended orthonormal basis in the previous theorem can be reached in some iteration only if we already started with a zero extended orthonormal basis. Otherwise, by Proposition~$\ref{PropStationaryGGSP}$, each iteration will yield a different Parseval frame.

Even though the sequence of Parseval frames produced by GGSP has a convergent subsequence for any starting frame, it is still unknown whether this sequence converges for any starting frame or, if it doesn't, a complete characterization of those frames that cause it to converge still remains an open problem.

\section{Examples}

In this section we will explore some numerical examples. In each of these examples of frames in $\R^2$ we can notice that after only a few iterations two vectors will stand out and form something that approximates an orthonormal basis. Other vectors will converge to zero. On the left we will have the starting frame together with the first $8$ iterations and on the right will be the $1000$-th iteration, which will serve to illustrate the limit Parseval frame.

First, we will choose the starting frame which has three vectors, out of which two are orthonormal (Figure~\ref{fig:1}).

\begin{figure}[ht]
\begin{center}
\includegraphics[width=0.4\textwidth]{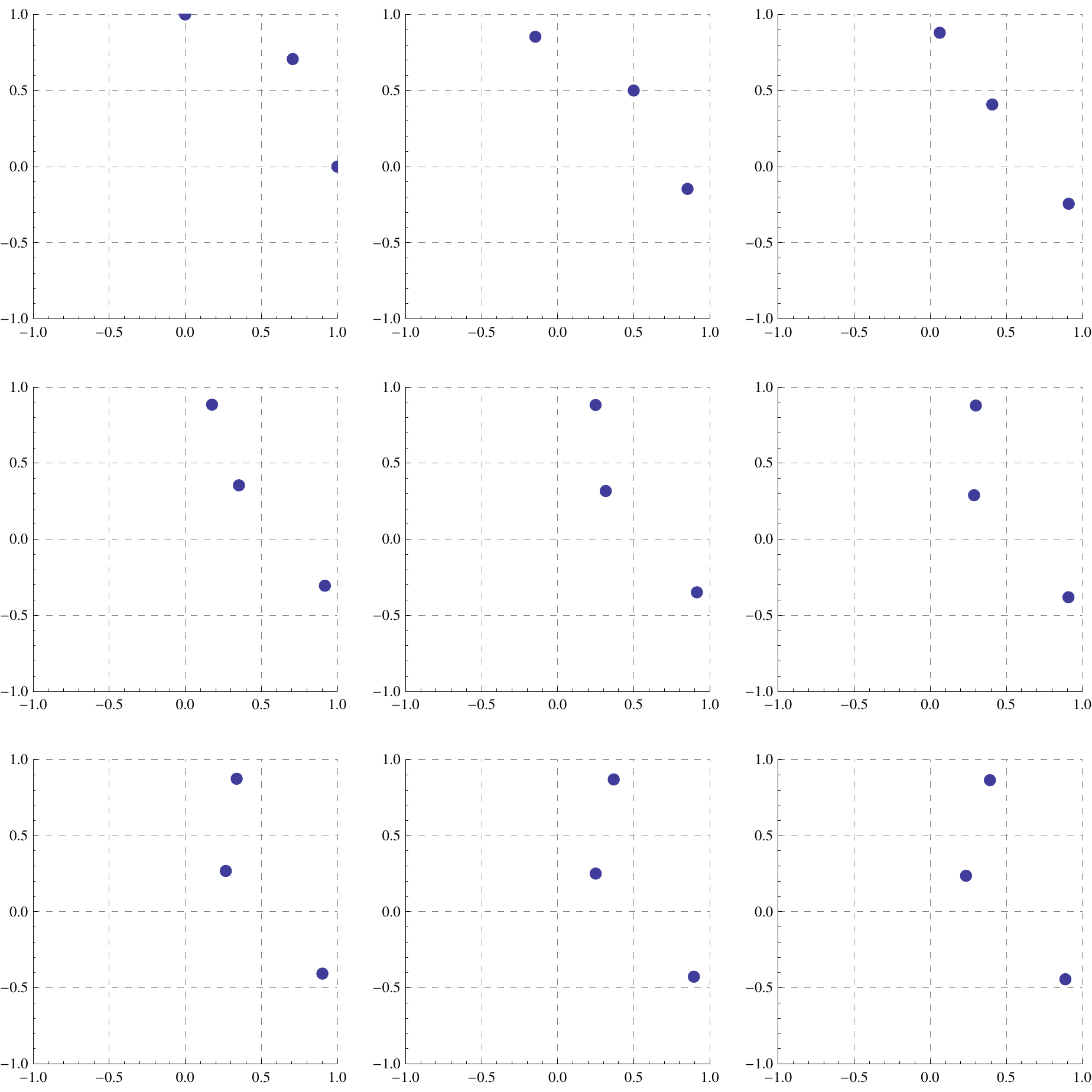}
\includegraphics[width=0.4\textwidth]{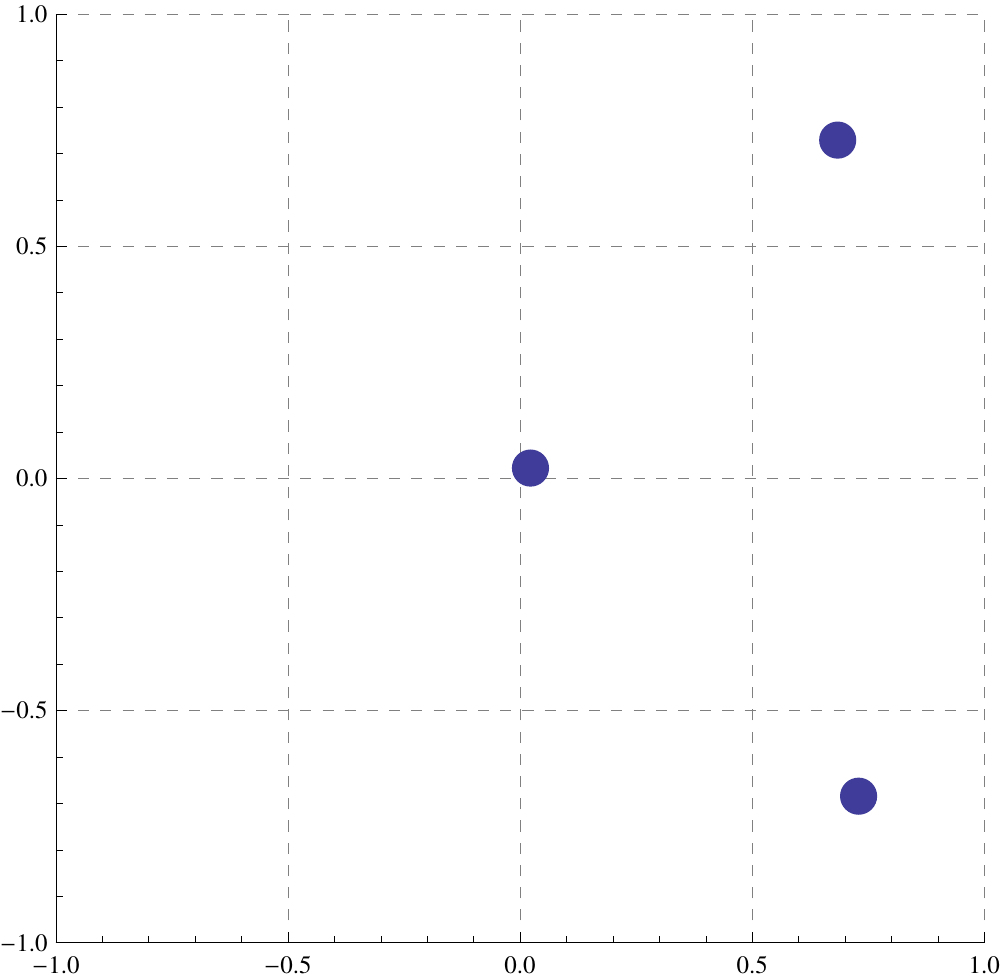}
\caption{On the left: the frame $\left\{ \left(1, 0\right), \left(0, 1\right), \left( 1/\sqrt{2},1/\sqrt{2} \right) \right\}$ together with the first $8$ iterates, on the right: its iteration limit}
\label{fig:1}
\end{center}
\end{figure}

Next we keep the two orthogonal vectors as before, but choose another third vector (Figure~\ref{fig:2}).

\begin{figure}[ht]
\begin{center}
\includegraphics[width=0.4\textwidth]{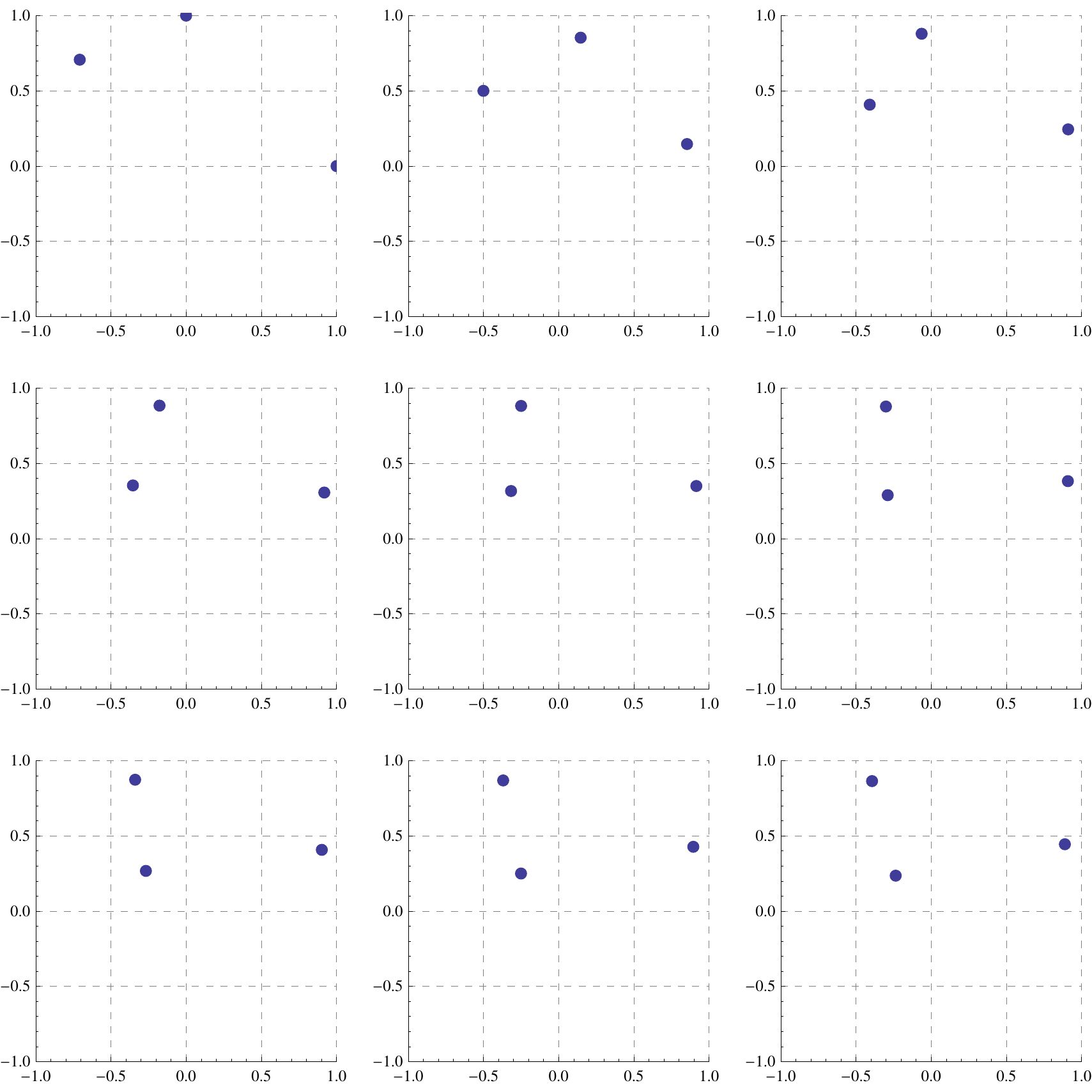}
\includegraphics[width=0.4\textwidth]{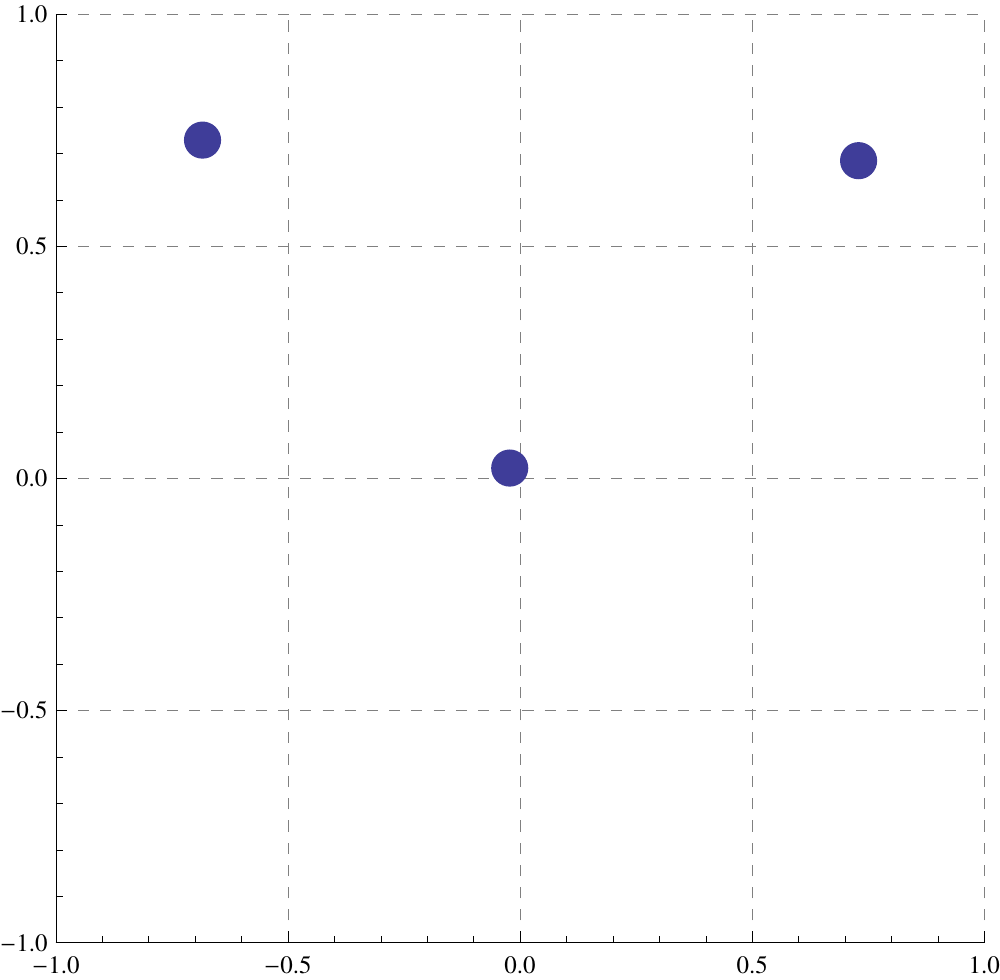}
\caption{On the left: the frame $\left\{ \left(1, 0\right), \left(0, 1\right), \left( -1/\sqrt{2},1/\sqrt{2} \right) \right\}$ together with the first $8$ iterates, on the right: its iteration limit}
\label{fig:2}
\end{center}
\end{figure}

We finish with a nice example in which the geometry preserving property of the algorithm is apparent in each iteration, but after a couple of iterations two vectors start to stand out which will form an orthonormal basis in the limit (Figure~\ref{fig:3}).
\pagebreak

\begin{figure}[ht]
\begin{center}
\includegraphics[width=0.4\textwidth]{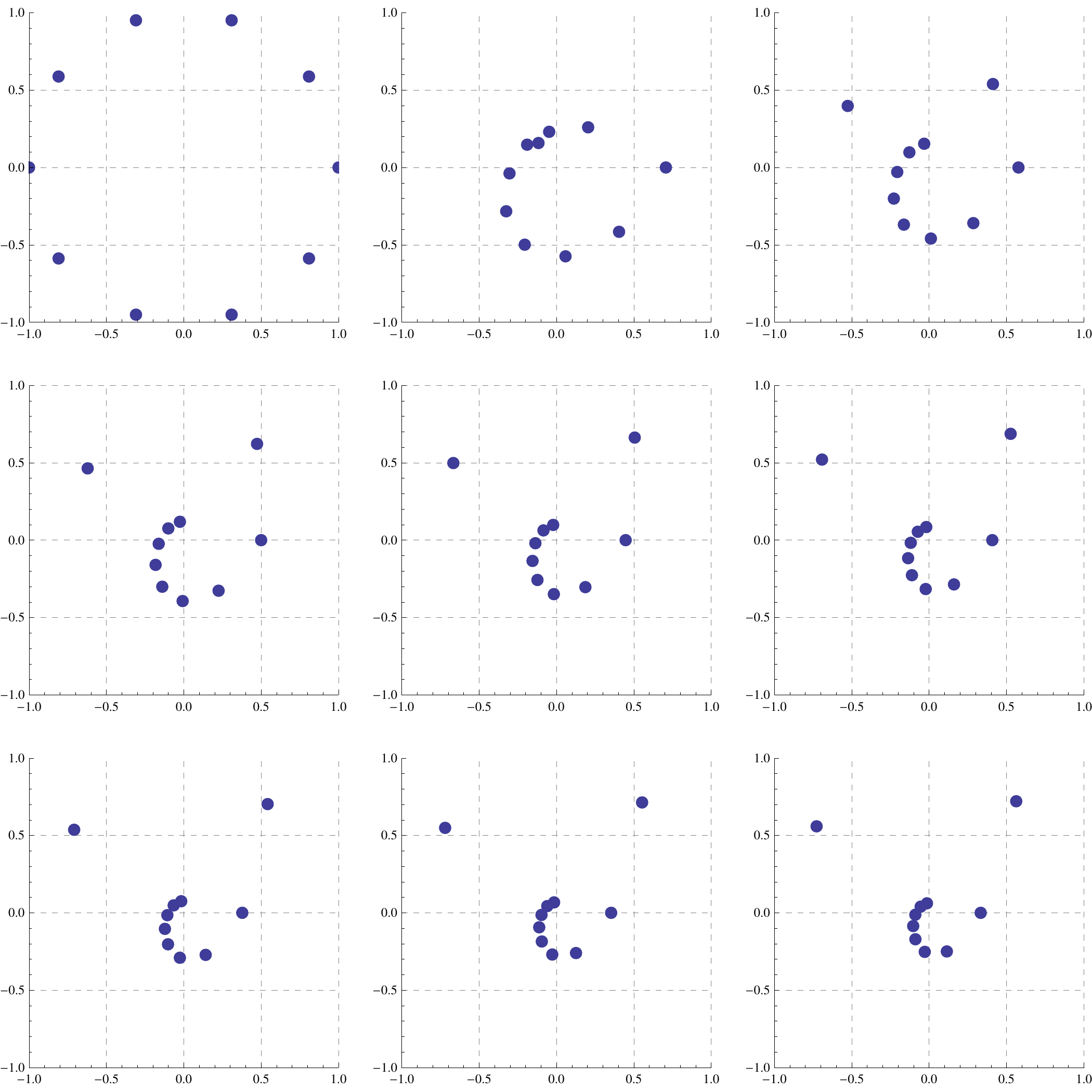}
\includegraphics[width=0.4\textwidth]{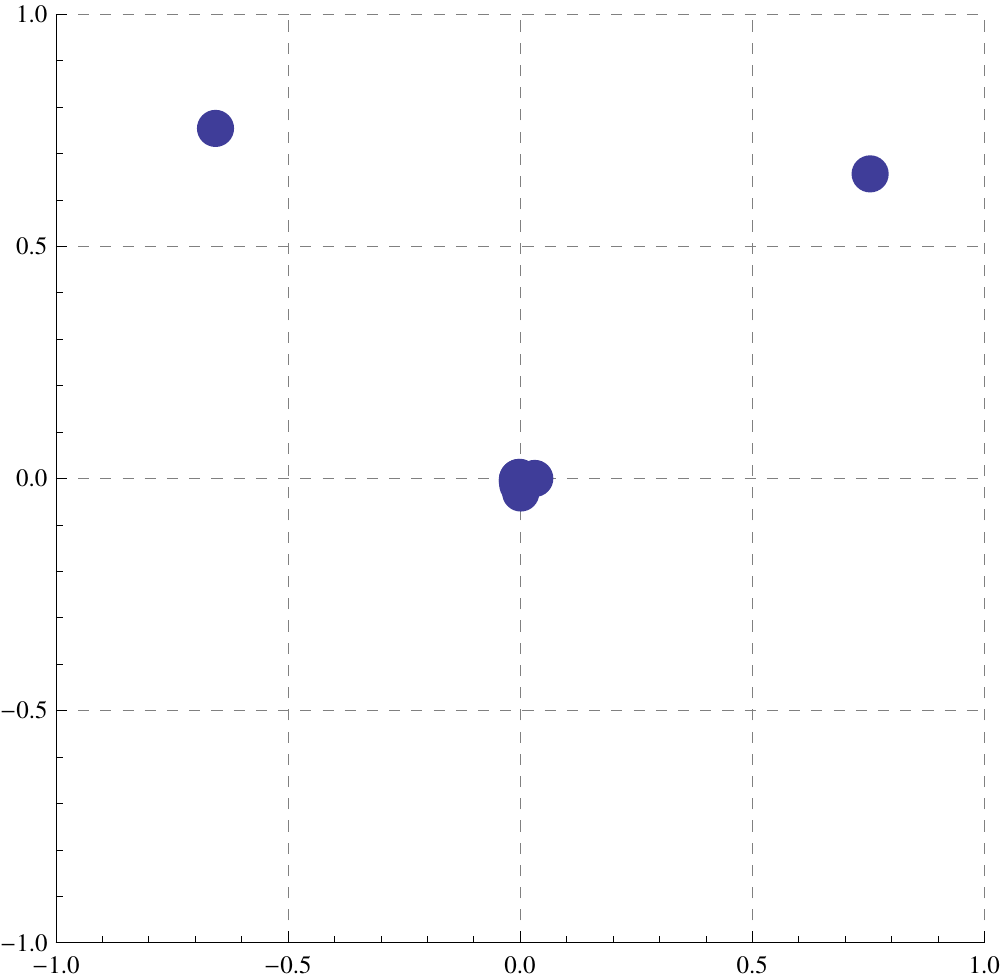}
\caption{On the left: the frame $\{ (\cos (2 k\pi/10), \sin (2 k\pi/10)) \}_{k = 1}^{10}$ together with the first $8$ iterates, on the right: its iteration limit}
\label{fig:3}
\end{center}
\end{figure}

\section*{Acknowledgements}

The author would like to thank Liljana Aramba\v{s}i\'{c} and Damir Baki\'{c} for many interesting and stimulating talks on this subject as well as for providing comments on how to improve on some results.

\vspace{.01in}

\end{document}